 \newtheorem{theorem}{Theorem}[section]
 \newtheorem{lemma}{Lemma}[section]
 \newtheorem{definition}{Definition}[section]
 \newtheorem{remark}{Remark}[section] 
 \numberwithin{equation}{section}
\begin{document}

\title{Power series of the operators $U_n^{\varrho}$
}

\date{Heiner Gonska, Ioan Ra\c sa and Elena Dorina St\u anil\u a  }


\maketitle

\begin{abstract}
We study power series of members of a class of positive linear operators reproducing linear function constituting a link between genuine Bernstein-Durrmeyer and classical Bernstein operators. Using the eigenstructure of the operators we give a non-quantitative convergence result towards the inverse Voronovskaya operators. We include a quantitative statement via a smoothing approach.\\
{\bf Keywords}: {Power series \and geometric series \and positive linear operator \and
Bernstein-type operator \and genuine Bernstein-Durrmeyer operator \and degree of approximation \and eigenstructure \and moduli of continuity.}\\
{\bf MSC 2010}: {41A10 \and 41A17 \and 41A25 \and 41A36}
\end{abstract}

\section{Introduction}
The present note is essentially motivated by two key papers of  
P\u alt\u anea which both appeared in two hardly known local Romanian  
journals.
In the first article mentioned \cite{Paltanea:2006} P\u alt\u anea 
defined power series of Bernstein operators (with $n$ fixed) and  
studied their approximation behaviour for functions defined on the  
space $C_0[0,1]:=\{f|f(x)=x(1-x)h(x), h\in C[0,1]\}$ to some extent. 
This article motivated a number  
of authors to study similar problems or give different proofs of  
P\u alt\u anea's main result. See \cite{Abel:2009},  \cite{Abel-Ivan-Paltanea:2013},  \cite{Abel-Ivan-Paltanea:2013-2},  \cite{Rasa:2012-2}. In one more and most significant article P\u alt\u anea \cite{Paltanea:2007}
introduced a very interesting link between the classical Bernstein  
operators $B_n$ and the so-called "genuine Bernstein-Durrmeyer  
operators" $U_n$, thus also bridging the gap between $U_n$ and  
piecewise linear interpolation in a most elegant way for the cases $0  
< \varrho \le 1$.
The operators $U_n^\rho$ also attracted several authors to study them  
further. See, for example, \cite{Gonska-Paltanea:2010-1}, \cite{Gonska-Paltanea:2010-2}.
In the present note we combine both approaches of P\u alt\u anea and study  
power (geometric) series of the operators $U_n^{\varrho}$, thus bridging  
the gap between power series of Bernstein operators and such of the  
genuine operators $U_n$ mentioned above.

Our main results will concern the convergence of the series as $n$ (the  
degree of the polynomials inside the series) tends to infinity. The  
first non-quantitative theorem will essentially use the  
eigenstructure of the $U_n^{\varrho}$ which was recently studied in \cite{Gonska-Rasa-Stanila:2013}.

The second result describes the degree of convergence to the 
"inverse \\ Voronovskaya operators" $-A_{\varrho}^{-1}$ using a smoothing (K- 
functional) approach and makes use of exact representations of the  
moments as presented in \cite{Gonska-Paltanea:2010-1}.

The quantitative statement also holds in the limiting case of  
Bernstein operators, thus supplementing the original work of P\u alt\u anea.

\section{The operators $ U_n^{\varrho} $ and their eigenstructure}
Denote by $C[0,1]$ the space of continuous, real-valued functions on $[0,1]$
and by $\Pi_{n}$ the space of polynomials of degree at most $n\in\mathbb{N}_{0}:=\{0,1,2...\}$.
\begin{definition}
Let $\varrho>0$ and $n\in\mathbb{N}_{0}, n\geq1$. Define the operator $U_n^{\varrho}:C[0,1]\rightarrow \Pi_{n}$
by
\begin{equation*}\label{eq1.1}
\begin{array}{lcl}
U_n^{\varrho}(f,x)&:=&\sum\limits_{k=0}^n F_{n,k}^{\varrho}(f)p_{n,k}(x)\\
&:=&\sum\limits_{k=1}^{n-1}\left(\int\limits_{0}^1\dfrac{t^{k\varrho-1}(1-t)^{(n-k)\varrho-1}}{B(k\varrho,(n-k)\varrho)}f(t)dt\right)p_{n,k}(x)+\vspace{0.3cm}\\
 &&+f(0)(1-x)^n+f(1)x^n,
\end{array}
\end{equation*}
$f\in C[0,1], x\in[0,1]$ and $B(\cdot,\cdot)$ is Euler's Beta function.
The fundamental functions $p_{n,k}$ are defined by
$$p_{n,k}(x)= {n\choose k}x^k(1-x)^{n-k}, \;\;\; 0\leq k\leq n, \;\;\; x\in[0,1].$$
\end{definition}
For $\varrho=1$ and $f\in C[0,1]$, we obtain
\begin{equation*}\label{un}
\begin{array}{lll}
U_{n}^1(f,x)=U_{n}(f,x)&=&(n-1)\sum\limits_{k=1}^{n-1}\left(\int\limits_{0}^1f(t)p_{n-2,k-1}(t)dt\right)p_{n,k}(x)\vspace{0.3cm}\\
&&+(1-x)^n f(0)+x^n f(1),
\end{array}
\end{equation*}
where $U_{n}$ are the ``genuine'' Bernstein-Durrmeyer operators (see \cite[Th. 2.3]{Gonska-Paltanea:2010-1}),
while for $\varrho\rightarrow\infty$, for each $f\in C[0,1]$ the sequence $U_n^{\varrho}(f,x)$ converges uniformly to the Bernstein polynomial
\begin{equation*}\label{ber}
B_{n}(f,x)=\sum\limits_{k=0}^{n}f\left(\dfrac{k}{n}\right)p_{n,k}(x).
\end{equation*}
Moreover, for $n$ fixed and $\varrho\rightarrow 0$ one has uniform convergence of $U_n^{\varrho}f$ towards the first Bernstein polynomial $B_1 f$, i.e., linear interpolation at $0$ and $1$ (see \cite[Th. 3.2]{Gonska-Paltanea:2010-2}).
The eigenstructure of $U_n^{\varrho}$ is described in
\cite{Gonska-Rasa-Stanila:2013}. The numbers
\begin{equation}\label{ps2.1}
\lambda_{\varrho,j}^{(n)}:=\dfrac{\varrho^jn!}{(n\varrho)^{\overline{j}}(n-j)!},
\;\;\; j=0,1,...,n,
\end{equation}
are eigenvalues of $U_n^{\varrho}$. To each of them there
corresponds a monic eigenpolynomial $p_{\varrho,j}^{(n)}$ such
that $deg\; p_{\varrho,j}^{(n)}=j, \; j=0,1,...,n.$ In particular,
\begin{equation}\label{ps2.2}
p_{\varrho,0}^{(n)}(x)=1, p_{\varrho,1}^{(n)}(x)=x-\dfrac{1}{2},
x\in[0,1].
\end{equation}
A complete description of $p_{\varrho,j}^{(n)}(x), j=2,...,n,$ can
be found in \cite{Gonska-Rasa-Stanila:2013}. From
(\cite{Gonska-Rasa-Stanila:2013}, (3.14)) we get
\begin{equation}\label{ps2.3}
p_{\varrho,j}^{(n)}(0)=p_{\varrho,j}^{(n)}(1)=0, j=2,...,n.
\end{equation}
Obviously $U_n^{\varrho}f$ can be decomposed with respect to the
basis
$\{p_{\varrho,0}^{(n)},p_{\varrho,1}^{(n)},...,p_{\varrho,n}^{(n)}\}$
of $\Pi_n$; this allows us to introduce the dual functionals
$\mu_{\varrho,j}^{(n)}:C[0,1]\rightarrow \mathbb{R}, j=0,1,...,n$,
by means of the formula
\begin{equation}\label{ps2.4}
U_n^{\varrho}f=\sum\limits_{j=0}^n\lambda_{\varrho,j}^{(n)}\mu_{\varrho,j}^{(n)}(f)p_{\varrho,j}^{(n)},
f\in C[0,1].
\end{equation}
In particular, since $U_n^{\varrho}$ restricted to $\Pi_n$ is
bijective, we have
\begin{equation}\label{ps2.5}
p=\sum\limits_{j=0}^n\mu_{\varrho,j}^{(n)}(p)p_{\varrho,j}^{(n)},
p \in \Pi_n.
\end{equation}
Now consider the numbers
\begin{equation}\label{ps2.6}
\lambda_{\varrho,j}:=-\dfrac{\varrho+1}{2\varrho}(j-1)j, j=0,1,...
\end{equation}
and the monic polynomials
\begin{equation}\label{ps2.7}
p_0^*(x)=1, p_1^*(x)=x-\dfrac{1}{2},
p_j^*(x)=x(x-1)P_{j-2}^{(1,1)}(2x-1), j\geq 2,
\end{equation}
where $P_i^{(1,1)}(x)$ are Jacobi polynomials, orthogonal with
respect to the weight $(1-x)(1+x)$ on $[-1,1], i\geq 0$. Moreover,
consider the linear functionals $\mu_j^*:C[0,1]\rightarrow
\mathbb{R}$, defined as
\begin{equation}\label{ps2.8}
\mu_0^*(f)=\dfrac{f(0)+f(1)}{2}, \mu_1^*(f)=f(1)-f(0),
\end{equation}
\begin{equation}\label{ps2.9}
\mu_j^*(f)=\dfrac{1}{2}{2j\choose
j}[(-1)^jf(0)+f(1)-j\int\limits_0^1f(x)P_{j-2}^{(1,1)}(2x-1)dx],
j\geq 2.
\end{equation}
It is easy to verify that
\begin{equation}\label{ps2.10}
\lim\limits_{n\rightarrow\infty}n(\lambda_{\varrho,j}^{(n)}-1)=\lambda_{\varrho,j},
j\geq 0.
\end{equation}
The following result can be found in
\cite{Gonska-Rasa-Stanila:2013}.
\begin{theorem}(\cite{Gonska-Rasa-Stanila:2013}) For each $j\geq
0$ we have
\begin{equation}\label{ps2.11}
\lim\limits_{n\rightarrow\infty}p_{\varrho,j}^{(n)}=p_{j}^*,\;
\mbox{uniformly on}\; [0,1],
\end{equation}
\begin{equation}\label{ps2.12}
\lim\limits_{n\rightarrow\infty}\mu_{\varrho,j}^{(n)}(p)=\mu_{j}^*(p),
 p\in \Pi.
\end{equation}
\end{theorem}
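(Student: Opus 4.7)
The plan is to route both limits through the Voronovskaya-type operator $A_\varrho f(x) := \frac{\varrho+1}{2\varrho}\, x(1-x)\, f''(x)$, which on polynomials of fixed degree is the limit of $n(U_n^\varrho - I)$. The preliminary calculation is that $p_j^*$ from (\ref{ps2.7}) is the unique monic eigenpolynomial of $A_\varrho$ of degree $j$ with eigenvalue $\lambda_{\varrho,j}$: the cases $j=0,1$ are immediate, and for $j\ge 2$ one factors out $x(x-1)$ and reduces to the classical Jacobi differential equation for $P_{j-2}^{(1,1)}$ via the change of variable $t=2x-1$. Since the $\lambda_{\varrho,i}$ are pairwise distinct, each monic eigenspace of $A_\varrho$ is one-dimensional.

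For (\ref{ps2.11}), I would rewrite the eigenequation $U_n^\varrho p_{\varrho,j}^{(n)}=\lambda_{\varrho,j}^{(n)}p_{\varrho,j}^{(n)}$ as
\[
n(U_n^\varrho - I)\, p_{\varrho,j}^{(n)} \;=\; n\bigl(\lambda_{\varrho,j}^{(n)}-1\bigr)\, p_{\varrho,j}^{(n)},
\]
and expand $p_{\varrho,j}^{(n)}(x)=x^j+\sum_{i<j}c_i^{(n)}x^i$. Because $U_n^\varrho$ preserves $\Pi_j$ and carries $\lambda_{\varrho,j}^{(n)}$ as the diagonal entry attached to $x^j$, matching coefficients gives a triangular linear system for the $c_i^{(n)}$ whose entries, by (\ref{ps2.10}) and the Voronovskaya convergence on each monomial, tend to those of the analogous system for the $A_\varrho$-eigenpolynomial of eigenvalue $\lambda_{\varrho,j}$. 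Simplicity of the $\lambda_{\varrho,i}$ makes the limiting system uniquely solvable, so each $c_i^{(n)}$ converges to the corresponding coefficient of $p_j^*$; pointwise convergence of the coefficients of a monic polynomial of fixed degree then gives uniform convergence on $[0,1]$.

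For (\ref{ps2.12}), the key observation is that, for $p\in\Pi_d$ and $n\ge d$, (\ref{ps2.5}) collapses to $p=\sum_{j=0}^{d}\mu_{\varrho,j}^{(n)}(p)\,p_{\varrho,j}^{(n)}$: the monic polynomials of degrees $0,\dots,d$ already span $\Pi_d$, and uniqueness of the expansion forces $\mu_{\varrho,j}^{(n)}(p)=0$ for $j>d$. Thus $\mu_{\varrho,j}^{(n)}(p)$ is the $j$-th coordinate of $p$ in the basis $\{p_{\varrho,i}^{(n)}\}_{i=0}^{d}$. The change-of-basis matrix from monomials to this basis is lower unitriangular and, by (\ref{ps2.11}), its entries converge to those of the corresponding matrix for $\{p_i^*\}_{i=0}^{d}$; inverting yields $\mu_{\varrho,j}^{(n)}(p)\to\widetilde\mu_j(p)$, the $j$-th coordinate of $p$ in $\{p_i^*\}$. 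To close the argument I would identify $\widetilde\mu_j$ with the $\mu_j^*$ of (\ref{ps2.8})--(\ref{ps2.9}): use the boundary vanishing (\ref{ps2.3}) to peel off the endpoint contributions $f(0),f(1)$, and the classical orthogonality
\[
\int_0^1 x(1-x)\, P_i^{(1,1)}(2x-1)\, P_k^{(1,1)}(2x-1)\, dx \;\propto\; \delta_{ik}
\]
to extract the Jacobi-weighted integral coefficient.

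The main technical obstacle is this last identification step: the particular normalizing constants $\tfrac12\binom{2j}{j}$ and the sign $(-1)^j$ in (\ref{ps2.9}) must be recovered exactly from the Jacobi inner product against the weight $x(1-x)$ on $[0,1]$, which forces some bookkeeping with the leading coefficient of $P_{j-2}^{(1,1)}$. Everything else is essentially a finite-dimensional perturbation argument around $A_\varrho$, where the spectral theory is elementary because the eigenvalues $\lambda_{\varrho,j}$ are real and simple.
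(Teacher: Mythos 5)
This theorem is not proved in the paper at all: it is imported verbatim from the reference \cite{Gonska-Rasa-Stanila:2013}, where the eigenstructure of $U_n^{\varrho}$ is worked out explicitly (recurrences for the coefficients of $p_{\varrho,j}^{(n)}$ and explicit formulas for the dual functionals, following Cooper--Waldron's treatment of the Bernstein operator) and the limits (\ref{ps2.11}), (\ref{ps2.12}) are obtained by passing to the limit in those formulas. Your soft perturbation argument around the Voronovskaya operator $A_{\varrho}$ is a genuinely different and in principle workable route: the triangularity of $U_n^{\varrho}$ on the monomial basis, the coefficientwise convergence of $n(U_n^{\varrho}-I)$ to $A_{\varrho}$ on each $\Pi_j$, and back-substitution in the resulting triangular eigenvector system do yield convergence of the monic eigenpolynomials without any explicit formulas; likewise the change-of-basis argument for the dual functionals is sound.

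Two points need repair, however. First, your appeal to ``the $\lambda_{\varrho,i}$ are pairwise distinct'' is false at the bottom of the spectrum: $\lambda_{\varrho,0}^{(n)}=\lambda_{\varrho,1}^{(n)}=1$ and $\lambda_{\varrho,0}=\lambda_{\varrho,1}=0$, so the eigenvalue $1$ (resp.\ $0$) is double and the monic degree-one eigenpolynomial is not unique without the extra normalization built into (\ref{ps2.2}). The cases $j=0,1$ must be disposed of trivially via $p_{\varrho,0}^{(n)}=p_0^*$, $p_{\varrho,1}^{(n)}=p_1^*$ exactly, and for $j\geq 2$ you need only that $\lambda_{\varrho,j}$ is simple among $\lambda_{\varrho,0},\dots,\lambda_{\varrho,j}$, which does hold since $j\mapsto j(j-1)$ is strictly increasing and nonzero there. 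Second, in the identification step for (\ref{ps2.12}) the weighted orthogonality $\int_0^1 x(1-x)P_i^{(1,1)}(2x-1)P_k^{(1,1)}(2x-1)\,dx\propto\delta_{ik}$ together with the boundary vanishing only settles $\mu_j^*(p_k^*)=\delta_{jk}$ for $j,k\geq 2$; the cases $k\in\{0,1\}$, $j\geq 2$ (i.e.\ $\mu_j^*(1)=\mu_j^*(e_1-\tfrac12)=0$) require the explicit values of $\int_{-1}^{1}P_{j-2}^{(1,1)}(t)\,dt$ and $\int_{-1}^{1}tP_{j-2}^{(1,1)}(t)\,dt$, which your sketch does not account for. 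Both facts constitute exactly the biorthogonality established by Cooper and Waldron \cite{Cooper-Waldron:2000}, so the cleanest fix is to cite it rather than rederive the normalizing constants $\tfrac12\binom{2j}{j}$ by hand.
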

\section{The power series $ A_n^{\varrho} $ }
Consider the space
\begin{equation}\label{ps3.1}
C_0[0,1]:=\{f|f(x)=x(1-x)h(x), h\in C[0,1]\}.
\end{equation}
For $f\in C_0[0,1], f(x)=x(1-x)h(x)$, define the norm
\begin{equation}\label{ps3.2}
||f||_0:=||h||_{\infty}.
\end{equation}
Endowed with the norm $||\cdot||_0, C_0[0,1]$ is a Banach space.
Obviously,
\begin{equation}\label{ps3.3}
||f||_{\infty}\leq \dfrac{1}{4}||f||_{0}, f\in C_0[0,1].
\end{equation}
\begin{lemma}\label{lm_ps3.1}
As a linear operator on $(C_0[0,1], ||\cdot||_0), U_n^{\varrho}$
has the norm
\begin{equation}\label{ps3.4}
||U_n^{\varrho}||_{0}=\dfrac{(n-1)\varrho}{n\varrho+1}<1.
\end{equation}
\end{lemma}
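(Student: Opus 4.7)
The plan is to establish the norm identity by bounding $\|U_n^{\varrho} f\|_0$ above, and then exhibiting a test function that attains the bound. Throughout, $f \in C_0[0,1]$ means $f(x) = x(1-x)h(x)$ with $\|f\|_0 = \|h\|_\infty$; in particular $f(0)=f(1)=0$, so the two boundary terms in the definition of $U_n^{\varrho}f$ vanish, and invariance of $C_0[0,1]$ under $U_n^{\varrho}$ is evident because each $p_{n,k}$ with $1 \le k \le n-1$ carries a factor $x(1-x)$.

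First I would rewrite $p_{n,k}$ for $1 \le k \le n-1$ as $p_{n,k}(x) = \frac{n(n-1)}{k(n-k)}\, x(1-x)\, p_{n-2,k-1}(x)$. Substituting this back produces
\begin{equation*}
U_n^{\varrho}(f,x) = x(1-x)\sum_{k=1}^{n-1} F_{n,k}^{\varrho}(f)\,\frac{n(n-1)}{k(n-k)}\,p_{n-2,k-1}(x),
\end{equation*}
so the bracketed sum is the function $\tilde h$ for which $\|U_n^{\varrho}f\|_0 = \|\tilde h\|_\infty$.

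Next I would estimate $F_{n,k}^{\varrho}(f)$. Inserting $f(t)=t(1-t)h(t)$ into the integral and using the beta identity
\begin{equation*}
\frac{B(k\varrho+1,(n-k)\varrho+1)}{B(k\varrho,(n-k)\varrho)} = \frac{k\varrho \cdot (n-k)\varrho}{n\varrho(n\varrho+1)} = \frac{k(n-k)\varrho}{n(n\varrho+1)},
\end{equation*}
one recognises the remaining integrand as a Beta probability density against $h$, giving
\begin{equation*}
|F_{n,k}^{\varrho}(f)| \le \frac{k(n-k)\varrho}{n(n\varrho+1)}\,\|h\|_\infty.
\end{equation*}
The factors $k(n-k)$ cancel beautifully against the ones coming from $p_{n,k}/(x(1-x))$, and summing via the Bernstein partition of unity $\sum_{j=0}^{n-2} p_{n-2,j}(x) = 1$ yields $\|\tilde h\|_\infty \le \frac{(n-1)\varrho}{n\varrho+1}\,\|h\|_\infty$, which is the required upper bound.

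For sharpness, I would take $h \equiv 1$, i.e.\ $f(x) = x(1-x)$. By $(2.3)$ the degree-$2$ monic eigenpolynomial $p_{\varrho,2}^{(n)}$ vanishes at $0$ and $1$, hence $p_{\varrho,2}^{(n)}(x) = -x(1-x)$, so $x(1-x)$ is itself an eigenfunction of $U_n^{\varrho}$ with eigenvalue $\lambda_{\varrho,2}^{(n)} = \frac{\varrho^2 n(n-1)}{n\varrho(n\varrho+1)} = \frac{(n-1)\varrho}{n\varrho+1}$. Thus $\|U_n^{\varrho}f\|_0 / \|f\|_0 = \frac{(n-1)\varrho}{n\varrho+1}$, and the norm is attained; strict inequality $<1$ is then immediate. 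The only real obstacle is spotting the beta-function cancellation that makes the estimate proceed termwise into a sum of Bernstein polynomials; once that is in place the computation is mechanical, and the eigenvalue computation provides a clean sharpness witness without any extremal analysis.
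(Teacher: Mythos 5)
Your proof is correct and follows essentially the same route as the paper: the same factorization $U_n^{\varrho}f=x(1-x)\tilde h(x)$ via $p_{n,k}=\frac{n(n-1)}{k(n-k)}x(1-x)p_{n-2,k-1}$, the same Beta-function cancellation plus partition of unity for the upper bound, and the same test function $g(x)=x(1-x)$ for sharpness. The only (harmless) cosmetic difference is that you obtain $U_n^{\varrho}g=\frac{(n-1)\varrho}{n\varrho+1}g$ from the eigenstructure rather than by direct computation.
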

\begin{proof}
Let $f\in C_0[0,1], f(x)=x(1-x)h(x), h\in C[0,1]$. By
straightforward computation we get $U_n^{\varrho}f(x)=x(1-x)u(x)$,
where
$$
u(x)=n(n-1)\sum\limits_{k=1}^{n-1}\dfrac{\int\limits_0^1t^{k\varrho}(1-t)^{(n-k)\varrho}h(t)dt}{k(n-k)B(k\varrho,(n-k)\varrho)}p_{n-2,k-1}(x).
$$
It follows immediately that $U_n^{\varrho}f\in C_0[0,1]$ and
$$
||U_n^{\varrho}f||_{0}=||u||_{\infty}\leq
\dfrac{(n-1)\varrho}{n\varrho+1}||h||_{\infty}=\dfrac{(n-1)\varrho}{n\varrho+1}||f||_0.
$$
Thus
\begin{equation}\label{ps3.5}
||U_n^{\varrho}||_{0}\leq\dfrac{(n-1)\varrho}{n\varrho+1}.
\end{equation}
On the other hand, let $g(x)=x(1-x), x\in[0,1]$. Then $||g||_0=1$
and $U_n^{\varrho}g(x)=x(1-x)\dfrac{(n-1)\varrho}{n\varrho+1}$,
which entails
$||U_n^{\varrho}g||_0=\dfrac{(n-1)\varrho}{n\varrho+1}$ and so
\begin{equation}\label{ps3.6}
||U_n^{\varrho}||_{0}\geq\dfrac{(n-1)\varrho}{n\varrho+1}.
\end{equation}
Now (\ref{ps3.4}) is a consequence of (\ref{ps3.5}) and
(\ref{ps3.6}).
\end{proof}
According to Lemma \ref{lm_ps3.1}, it is possible to consider the
operator $A_n^{\varrho}:C_0[0,1]\rightarrow C_0[0,1]$,
\begin{equation}\label{ps3.7}
A_n^{\varrho}:=\dfrac{\varrho}{n\varrho+1}\sum\limits_{k=0}^{\infty}(U_n^{\varrho})^k,
n\geq 1.
\end{equation}
For later purposes we also introduce the notation
$$
A_n^{\infty}:=\dfrac{1}{n}\sum_{k=0}^{\infty}(B_n)^k, \; n\geq 1,
$$
in order to have P\u alt\u anea's power series included.\\
 By using (\ref{ps3.4}) we get
 $||A_n^{\varrho}||_0\leq\dfrac{\varrho}{\varrho+1}$, and with the
 same function $g(x)=x(1-x)$ we find
\begin{equation}\label{ps3.8}
||A_n^{\varrho}||_0=\dfrac{\varrho}{\varrho+1}, n\geq 1.
\end{equation}
Let $p\in \Pi_m\cap C_0[0,1]$, i.e., $p(0)=p(1)=0$. Then $m\geq
2$. Let $n\geq m$. From (\ref{ps2.2}), (\ref{ps2.3}) and
(\ref{ps2.5}) we derive
\begin{equation}\label{ps3.9}
p=\sum\limits_{j=2}^m\mu_{\varrho,j}^{(n)}(p)p_{\varrho,j}^{(n)}
\end{equation}
and, moreover,
\begin{equation}\label{ps3.10}
(U_n^{\varrho})^kp=\sum\limits_{j=2}^m(\lambda_{\varrho,j}^{(n)})^k\mu_{\varrho,j}^{(n)}(p)p_{\varrho,j}^{(n)},
k\geq 0, \;\mbox{for all}\: n\geq m.
\end{equation}
According to (\ref{ps3.7}), for all $p\in \Pi_m\cap C_0[0,1]$ and
$n\geq m$,
\begin{equation}\label{ps3.11}
A_n^{\varrho}p=\dfrac{\varrho}{n\varrho+1}\sum\limits_{j=2}^m\dfrac{1}{1-\lambda_{\varrho,j}^{(n)}}\mu_{\varrho,j}^{(n)}(p)p_{\varrho,j}^{(n)}.
\end{equation}
By using (\ref{ps2.10}), (\ref{ps2.11}) and (\ref{ps2.12}) we get
\begin{equation}\label{ps3.12}
\lim\limits_{n\rightarrow\infty}A_n^{\varrho}p=\dfrac{\varrho}{\varrho+1}\sum\limits_{j=2}^m\dfrac{2}{j(j-1)}\mu_{j}^*(p)p_{j}^{*},
\end{equation}
uniformly on $[0,1]$, for all $p\in \Pi_m\cap C_0[0,1]$.
\section{The Voronovskaya operator $ A_{\varrho} $ }
It was proved in \cite[p. 918]{Gonska-Paltanea:2010-2} that
$$
\lim\limits_{n\rightarrow\infty}n(U_n^{\varrho
}g(x)-g(x))=\dfrac{\varrho+1}{2\varrho}x(1-x)g''(x), g\in C^2[0,1],
$$
uniformly on $[0,1]$. We need the following result.
\begin{theorem}
The operator $\{y\in C^2[0,1]|\;
y(0)=y(1)=0\}\rightarrow C_0[0,1]$ defined by
\begin{equation}\label{ps4.1}
A_{\varrho}y(x):=\dfrac{\varrho+1}{2\varrho}x(1-x)y''(x), x\in
[0,1],
\end{equation}
is bijective, and
\begin{equation}\label{ps4.2}
||A_{\varrho}^{-1}f||_{\infty}\leq\dfrac{\varrho}{4(\varrho+1)}||f||_0,
f\in C_0[0,1].
\end{equation}
\end{theorem}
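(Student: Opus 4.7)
The natural approach is to invert $A_\varrho$ by explicitly solving a two-point boundary value problem and then to read off the norm estimate from a Green's function representation. Given $f\in C_0[0,1]$ with $f(x)=x(1-x)h(x)$ and $h\in C[0,1]$, the equation $A_\varrho y = f$ is equivalent (at every $x\in(0,1)$ by cancelling the factor $x(1-x)$, and at $x=0,1$ by continuity) to
$$y''(x)=\frac{2\varrho}{\varrho+1}\,h(x),\qquad y(0)=y(1)=0.$$
So the claim reduces to showing that this BVP admits a unique $C^2[0,1]$-solution satisfying $\|y\|_\infty\le \frac{\varrho}{4(\varrho+1)}\|f\|_0$.

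Injectivity is immediate: if $A_\varrho y=0$ with $y\in C^2[0,1]$ and $y(0)=y(1)=0$, then $y''\equiv 0$ on $(0,1)$, so $y$ is affine; the boundary conditions then force $y\equiv 0$. For surjectivity I would use the standard Green's function
$$G(x,t)=\begin{cases}t(1-x),&0\le t\le x,\\ x(1-t),& x\le t\le 1,\end{cases}$$
for $-y''=\cdot$ with $y(0)=y(1)=0$, and define $y(x):=-\frac{2\varrho}{\varrho+1}\int_0^1 G(x,t)h(t)\,dt$. Continuity of $h$ yields $y\in C^2[0,1]$ with $y(0)=y(1)=0$, and a routine differentiation (splitting the integral at $t=x$ and using the jump in $\partial G/\partial x$) gives $y''=\frac{2\varrho}{\varrho+1}\,h$, so $A_\varrho y=f$.

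The norm bound then follows from the pointwise estimate
$$|y(x)|\le\frac{2\varrho}{\varrho+1}\|h\|_\infty\int_0^1 G(x,t)\,dt=\frac{2\varrho}{\varrho+1}\|f\|_0\cdot\frac{x(1-x)}{2}\le\frac{\varrho}{4(\varrho+1)}\|f\|_0,$$
using the elementary identities $\int_0^1 G(x,t)\,dt=x(1-x)/2$ and $\max_{[0,1]}x(1-x)=1/4$, together with $\|h\|_\infty=\|f\|_0$ (the definition of $\|\cdot\|_0$). There is no substantive obstacle: the whole proof is standard ODE theory, and the only care required is accurate bookkeeping of the two constants $2\varrho/(\varrho+1)$ from the ODE and $1/8$ from the Green's function integral, which combine to give precisely $\varrho/(4(\varrho+1))$.
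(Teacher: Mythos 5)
Your proof is correct and follows essentially the same route as the paper: your Green's function representation $y(x)=-\frac{2\varrho}{\varrho+1}\int_0^1 G(x,t)h(t)\,dt$ is exactly the paper's explicit formula $-\frac{2\varrho}{\varrho+1}\bigl[(1-x)\int_0^x th(t)\,dt+x\int_x^1(1-t)h(t)\,dt\bigr]$, and the norm bound via $\int_0^1 G(x,t)\,dt=x(1-x)/2\le 1/8$ is the same computation the paper carries out. Nothing to add.
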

\begin{proof}
Obviously $A_{\varrho}$ is injective. To prove the surjectivity, let $f\in C_0[0,1], f(x)=x(1-x)h(x), h\in C[0,1]$. It is a matter of calculus to verify that the function
$$
-\dfrac{2\varrho}{\varrho+1}F_{\infty}(h;x)=y(x):=-\dfrac{2\varrho}{\varrho+1}\left[(1-x)\int\limits_0^xth(t)dt+x\int\limits_x^1(1-t)h(t)dt\right], 
$$
for  $x\in [0,1]$ is in $C^2[0,1], y(0)=y(1)=0$, and $A_{\varrho}y=f$. Therefore $A_{\varrho}$ is bijective. Moreover, for $x\in [0,1]$,
$y=A_{\varrho}^{-1}(f)$, i.e., $ -y(x)=-A_{\varrho}^{-1}(f;x)=+\dfrac{2\varrho}{\varrho+1}F_{\infty}(h;x)$. Consequently,
\begin{eqnarray}
|A_{\varrho}^{-1}f(x)|&\leq&\dfrac{2\varrho}{\varrho+1}\left[(1-x)\int\limits_0^xtdt+x\int\limits_x^1(1-t)dt\right]||h||_{\infty}\\
&=&\dfrac{\varrho}{\varrho+1}x(1-x)||h||_{\infty}\leq \dfrac{\varrho}{4(\varrho+1)}||f||_{0},
\end{eqnarray}
and this leads to (\ref{ps4.2}).
\end{proof}
\begin{remark}
Further below we will use the notation $\Psi(x):=x(1-x)$, and
$$
-A_{\infty}^{-1}(\Psi h):=2\cdot F_{\infty}(h), \; h\in C[0,1],
$$
in order to also cover the Bernstein case.
\end{remark}
Another useful result reads as follows.
\begin{lemma}\label{lm_ps4.2}
For all $p\in \Pi\cap C_0[0,1]$ we have
\begin{equation}\label{ps4.3}
\lim\limits_{n\rightarrow\infty}A_n^{\varrho}p=-A_{\varrho}^{-1}p, 
\end{equation}
uniformly on $[0,1]$.
\end{lemma}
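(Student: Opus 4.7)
The plan is to match the right-hand side of (\ref{ps3.12}) with $-A_\varrho^{-1}p$ by diagonalizing $A_\varrho$ on the basis $\{p_j^*\}_{j\ge 2}$ of $\Pi\cap C_0[0,1]$. The decisive observation is that each limit eigenpolynomial $p_j^*$ is itself an eigenfunction of the Voronovskaya operator $A_\varrho$, with eigenvalue exactly $\lambda_{\varrho,j}=-\frac{\varrho+1}{2\varrho}(j-1)j$ from (\ref{ps2.6}); then $-1/\lambda_{\varrho,j}=\frac{\varrho}{\varrho+1}\cdot\frac{2}{j(j-1)}$ is precisely the prefactor appearing in (\ref{ps3.12}).

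First I would verify $A_\varrho p_j^*=\lambda_{\varrho,j}p_j^*$ for $j\ge 2$. Write $p_j^*(x)=-\Psi(x)q(x)$ with $q(x)=P_{j-2}^{(1,1)}(2x-1)$, and substitute $u=2x-1$, so that $\Psi(x)=(1-u^2)/4$ and $q'(x)=2P'(u)$, $q''(x)=4P''(u)$ where $P=P_{j-2}^{(1,1)}$. A direct expansion gives
\[
(\Psi q)''(x) = -2P(u)-4uP'(u)+(1-u^2)P''(u),
\]
and the Jacobi ODE $(1-u^2)P''-4uP'+(j-2)(j+1)P=0$ reduces this to $(\Psi q)''=-j(j-1)q$. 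Consequently $\Psi\cdot(p_j^*)''=-\Psi\cdot(\Psi q)''=j(j-1)\Psi q=-j(j-1)p_j^*$, whence $A_\varrho p_j^*=\frac{\varrho+1}{2\varrho}\Psi\cdot(p_j^*)''=\lambda_{\varrho,j}p_j^*$ and $-A_\varrho^{-1}p_j^*=\frac{\varrho}{\varrho+1}\cdot\frac{2}{j(j-1)}p_j^*$.

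Second, I would expand an arbitrary $p\in\Pi_m\cap C_0[0,1]$ in $\{p_j^*\}_{j=2}^m$: since this family is monic of strictly increasing degrees and vanishes at $0$ and $1$, it is a basis of $\Pi_m\cap C_0[0,1]$. The coefficients are identified by letting $n\to\infty$ in (\ref{ps3.9}) and applying (\ref{ps2.11})--(\ref{ps2.12}), yielding
\[
p=\sum_{j=2}^m \mu_j^*(p)\,p_j^*.
\]
Applying $-A_\varrho^{-1}$ termwise to this finite sum and using the first step gives
\[
-A_\varrho^{-1}p=\frac{\varrho}{\varrho+1}\sum_{j=2}^m\frac{2}{j(j-1)}\mu_j^*(p)\,p_j^*,
\]
which is exactly the right-hand side of (\ref{ps3.12}). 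Combining the two equalities proves (\ref{ps4.3}); the uniform convergence on $[0,1]$ is inherited directly from (\ref{ps3.12}).

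The main obstacle is the eigenfunction identity $A_\varrho p_j^*=\lambda_{\varrho,j}p_j^*$: although purely computational, it requires carefully threading the chain rule under $u=2x-1$ and invoking the Jacobi equation with the correct parameters so that the numerical factor matches $-\frac{\varrho+1}{2\varrho}j(j-1)$ exactly. Once this identification is in place, the rest of the argument is finite-dimensional bookkeeping against the already-established limits (\ref{ps2.10})--(\ref{ps2.12}) and (\ref{ps3.12}).
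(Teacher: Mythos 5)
Your proposal is correct and follows essentially the same route as the paper: diagonalize $A_{\varrho}$ on the $p_j^*$ via $\Psi\,(p_j^*)''=-j(j-1)p_j^*$, use $p=\sum_{j=2}^m\mu_j^*(p)p_j^*$, and match the resulting formula for $-A_{\varrho}^{-1}p$ with the limit (\ref{ps3.12}). The only (harmless) difference is that you verify the eigenfunction identity (\ref{ps4.4}) directly from the Jacobi differential equation and obtain the expansion $p=\sum_{j=2}^m\mu_j^*(p)p_j^*$ by letting $n\to\infty$ in (\ref{ps3.9}) with (\ref{ps2.11})--(\ref{ps2.12}), whereas the paper cites Cooper--Waldron for both facts.
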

\begin{proof}
The polynomials $p_j^*$ from (\ref{ps2.7}) satisfy
\begin{equation}\label{ps4.4}
x(1-x)(p_j^*)''(x)=-j(j-1)p_j^*(x), x\in [0,1], j\geq 0
\end{equation}
(see, e.g., \cite{Cooper-Waldron:2000}, p.155).
 This yields $A_{\varrho}p_j^*=-\dfrac{\varrho+1}{2\varrho}j(j-1)p_j^*, j\geq 0,$
and, moreover,
\begin{equation}\label{ps4.5}
A_{\varrho}\left(\sum\limits_{j=2}^m\dfrac{2}{j(j-1)}\mu_{j}^{*}(p)p_{j}^{*}\right)=-\dfrac{\varrho+1}{\varrho}\sum\limits_{j=2}^m\mu_{j}^{*}(p)p_{j}^{*}
\end{equation}
 for all $p\in \Pi_m\cap C_0[0,1]$.  According to (\cite{Cooper-Waldron:2000}, (4.18)), $\sum\limits_{j=2}^m\mu_{j}^{*}(p)p_{j}^{*}=p$, so that (\ref{ps4.5}) yields
 \begin{equation}\label{ps4.6}
\dfrac{\varrho}{\varrho+1}\sum\limits_{j=2}^m\dfrac{2}{j(j-1)}\mu_{j}^{*}(p)p_{j}^{*}=-A_{\varrho}^{-1}p,
\end{equation}
for all $p\in \Pi_m\cap C_0[0,1]$.  Now (\ref{ps4.3}) is a consequence of (\ref{ps3.12}) and (\ref{ps4.6}).
\end{proof}
\section{The convergence of $ A_n^{\varrho} $ on $C_0[0,1]$}
One main result of the paper is contained in
\begin{theorem}
For all $f\in C_0[0,1]$,
 \begin{equation}\label{ps5.1}
\lim\limits_{n\rightarrow\infty}A_n^{\varrho}f=-A_{\varrho}^{-1}f,
\end{equation}
uniformly on $[0,1]$.
\end{theorem}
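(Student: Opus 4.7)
The plan is to deduce the statement from Lemma \ref{lm_ps4.2}, which already gives (\ref{ps5.1}) on the polynomial subspace $\Pi\cap C_0[0,1]$, by a standard three-$\varepsilon$ argument that exploits the uniform-in-$n$ operator-norm bounds accumulated earlier in the paper.

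First, I will verify that $\Pi\cap C_0[0,1]$ is dense in $(C_0[0,1],\|\cdot\|_0)$. Given $f\in C_0[0,1]$, write $f(x)=x(1-x)h(x)$ with $h\in C[0,1]$; by Weierstrass's theorem choose polynomials $q_k$ with $\|h-q_k\|_{\infty}\to 0$ and set $p_k(x):=x(1-x)q_k(x)$. Then $p_k\in\Pi\cap C_0[0,1]$ and, directly from (\ref{ps3.2}),
\begin{equation*}
\|f-p_k\|_0=\|h-q_k\|_{\infty}\longrightarrow 0.
\end{equation*}

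Next I will combine three uniform estimates that are already at my disposal: (i) $\|A_n^{\varrho}\|_0\le\varrho/(\varrho+1)$ for every $n\ge 1$, from (\ref{ps3.8}); (ii) the continuous embedding $\|g\|_{\infty}\le\tfrac{1}{4}\|g\|_0$ on $C_0[0,1]$, from (\ref{ps3.3}); and (iii) $\|A_{\varrho}^{-1}g\|_{\infty}\le\tfrac{\varrho}{4(\varrho+1)}\|g\|_0$, from (\ref{ps4.2}). For any $f\in C_0[0,1]$ and any $p\in\Pi\cap C_0[0,1]$ I use the triangle inequality
\begin{equation*}
\|A_n^{\varrho}f+A_{\varrho}^{-1}f\|_{\infty}\le\|A_n^{\varrho}(f-p)\|_{\infty}+\|A_n^{\varrho}p+A_{\varrho}^{-1}p\|_{\infty}+\|A_{\varrho}^{-1}(f-p)\|_{\infty}.
\end{equation*}
Since $A_n^{\varrho}(f-p)\in C_0[0,1]$, combining (i) with (ii) bounds the first summand by $\tfrac{\varrho}{4(\varrho+1)}\|f-p\|_0$, while (iii) bounds the third by the same quantity, both \emph{uniformly in $n$}. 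Given $\varepsilon>0$, the density step then lets me fix $p$ so that the two outer summands are each below $\varepsilon/3$; Lemma \ref{lm_ps4.2} finally supplies $N$ such that the middle summand is below $\varepsilon/3$ for every $n\ge N$, which proves (\ref{ps5.1}).

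No serious obstacle is expected. The decisive structural feature is that the constants in (i) and (iii) are \emph{independent of $n$}, so the whole argument is a routine Banach--Steinhaus-style reduction to the dense polynomial subspace on which Lemma \ref{lm_ps4.2} provides pointwise-in-$p$ convergence. The only point that requires a moment's care is the clean passage between the two norms $\|\cdot\|_0$ and $\|\cdot\|_{\infty}$, which is precisely what (\ref{ps3.3}) and (\ref{ps4.2}) have been prepared to do.
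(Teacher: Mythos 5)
Your proposal is correct and follows essentially the same route as the paper: approximate $f=\Psi h$ in $\|\cdot\|_0$ by polynomials $\Psi q_k$ with $q_k\to h$ uniformly (the paper takes $q_k=B_k h$), then run the three-term triangle inequality using the $n$-independent bounds (\ref{ps3.3}), (\ref{ps3.8}), (\ref{ps4.2}) for the outer terms and Lemma \ref{lm_ps4.2} for the middle term. The only differences are cosmetic (abstract Weierstrass versus Bernstein polynomials, and $\varepsilon/3$ versus the paper's explicit constants).
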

\begin{proof}
 Let $f\in C_0[0,1], f(x)=x(1-x)h(x), h\in C[0,1]$. Consider the polynomials $p_i(x):=x(1-x)B_ih(x)$, where $B_i$ are the classical Bernstein operators, $i\geq 1$. Then $p_i\in C_0[0,1], i\geq 1$, and $\lim\limits_{i\rightarrow\infty}||p_i-f||_0=\lim\limits_{i\rightarrow\infty}||B_ih-h||_{\infty}=0$. Let $\varepsilon>0$ and fix $i\geq 1$ such that
\begin{equation}\label{ps5.2}
||p_i-f||_0\leq \dfrac{2\varrho+2}{3\varrho+2}\varepsilon.
\end{equation}
Then, according to Lemma \ref{lm_ps4.2}, there exists $n_{\varepsilon}$ such that
\begin{equation}\label{ps5.3}
||A_n^{\varrho}p_i+A_{\varrho}^{-1}p_i||_{\infty}\leq \dfrac{2\varrho+2}{3\varrho+2}\varepsilon, n\geq n_{\varepsilon}.
\end{equation}
Now using (\ref{ps3.3}) and (\ref{ps3.8}) we infer
$$
||A_n^{\varrho}f-A_n^{\varrho}p_i||_{\infty}\leq \dfrac{1}{4}||A_n^{\varrho}f-A_n^{\varrho}p_i||_{0}\leq\dfrac{1}{4}||A_n^{\varrho}||_0||f-p_i||_{0}\leq \dfrac{\varrho}{4(\varrho+1)}\dfrac{2\varrho+2}{3\varrho+2}\varepsilon,
$$
 so that
\begin{equation}\label{ps5.4}
||A_n^{\varrho}f-A_n^{\varrho}p_i||_{\infty}\leq \dfrac{\varrho}{2(3\varrho+2)}\varepsilon.
\end{equation}
On the other hand, (\ref{ps4.2}) and (\ref{ps5.2}) yield
\begin{equation}\label{ps5.5}
||A_{\varrho}^{-1}f-A_{\varrho}^{-1}p_i||_{\infty}\leq \dfrac{\varrho}{4(\varrho+1)}||f-p_i||_0\leq\dfrac{\varrho}{2(3\varrho+2)}\varepsilon.
\end{equation}
Finally, using (\ref{ps5.3}), (\ref{ps5.4}) and (\ref{ps5.5}) we obtain, for all $n\geq n_{\varepsilon}$,
$$
||A_n^{\varrho}f+A_{\varrho}^{-1}f||_{\infty}\leq ||A_n^{\varrho}f-A_n^{\varrho}p_i||_{\infty}+||A_n^{\varrho}p_i+A_{\varrho}^{-1}p_i||_{\infty}+||A_{\varrho}^{-1}f-A_{\varrho}^{-1}p_i||_{\infty}\leq\varepsilon,
$$
and this concludes the proof.
\end{proof}
On $(C[0,1],||\cdot||_{\infty})$ consider the linear operator $H_n^{\varrho}:=A_n^{\varrho}-(-A_{\varrho}^{-1})$ given by
\begin{eqnarray*}
C[0,1]\ni h\mapsto  A_n^{\varrho}(\Psi h;x)&=&\frac{\varrho}{n\varrho+1}\sum\limits_{k=0}^{\infty}(U_n^{\varrho})^k(\Psi h;x)\in C_0[0,1]\\
C[0,1]\ni h\mapsto  -A_{\varrho}^{-1}(\Psi h;x)&=&\frac{2\varrho}{\varrho+1}\left[(1-x)\int\limits_0^x th(t)dt+x\int\limits_x^1(1-t)h(t)dt\right]\\&=&\frac{2\varrho}{\varrho+1}F_{\infty}(h;x)\in C_0[0,1]
\end{eqnarray*}
\begin{theorem}
Let $h\in C [0, 1], \varrho>0, n \geq \frac{4\varrho+6}{\varrho}, \varepsilon=\sqrt{\frac{\varrho+2}{n\varrho+2}}\leq \frac{1}{2}$ and $\Psi(x)=x(1-x)$. Then 
\begin{eqnarray}
|H_n^{\varrho}(h;x)|\leq \Psi(x)\left[\dfrac{2\varrho}{3(\varrho+1)}\sqrt{\dfrac{\varrho+2}{n\varrho+2}}\omega_1(h;\varepsilon)+ \;\;\;\;\;\;\;\;\;\;\;\;\;\;\;\;\;\;\;\;\;\;\;\;\;\;\;\;\;\;\;\;\right.\\ \left.\;\;\;\;\;\;\;\;\;\;\;\;\;\;\;\;+\dfrac{3}{4}\left(\dfrac{2\varrho}{\varrho+1}+\dfrac{2\varrho}{3(\varrho+1)}\sqrt{\dfrac{\varrho+2}{n\varrho+2}}+\dfrac{7(\varrho+3)}{6(\varrho+1)}\right)\omega_2(h;\varepsilon)\right].\nonumber
\end{eqnarray}
\end{theorem}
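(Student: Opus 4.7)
My plan is to recast the error as an application of $A_n^\varrho$ to a Voronovskaya-type remainder and then to control that remainder via a second-order K-functional smoothing of $h$. Setting $\tilde g := -A_\varrho^{-1}(\Psi h) = \frac{2\varrho}{\varrho+1}F_\infty(h)$, Theorem 4.1 gives $\tilde g\in C^2[0,1]$ with $\tilde g(0)=\tilde g(1)=0$ and $\tilde g'' = -\frac{2\varrho}{\varrho+1}h$. The defining Neumann series for $A_n^\varrho$ yields the resolvent identity $A_n^\varrho(I-U_n^\varrho) = \frac{\varrho}{n\varrho+1}I$ on $C_0[0,1]$; subtracting the identity $(I-U_n^\varrho)A_n^\varrho(\Psi h)=\frac{\varrho}{n\varrho+1}\Psi h$ from $(I-U_n^\varrho)\tilde g$ and applying $(I-U_n^\varrho)^{-1}=\frac{n\varrho+1}{\varrho}A_n^\varrho$ on the left yields
\[
H_n^\varrho(h;x) = \tfrac{n\varrho+1}{\varrho}\,A_n^\varrho(S;\,x),\quad S(x) := \tfrac{\varrho}{n\varrho+1}\Psi(x)h(x)-\bigl(\tilde g(x)-U_n^\varrho\tilde g(x)\bigr).
\]
Because $\|A_n^\varrho\|_0 = \varrho/(\varrho+1)$ by Lemma 3.1, the problem reduces to the pointwise estimate $|S(x)/\Psi(x)|\le M$, after which $|H_n^\varrho(h;x)|\le\frac{n\varrho+1}{\varrho+1}\Psi(x)M$ delivers the theorem.

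The decisive cancellation inside $S$ comes from the exact second central moment $U_n^\varrho((t-x)^2;x)=(\varrho+1)\Psi(x)/(n\varrho+1)$ from \cite{Gonska-Paltanea:2010-1}: together with $\tilde g''=-\frac{2\varrho}{\varrho+1}h$ it makes $\tfrac12\tilde g''(x)U_n^\varrho((t-x)^2;x)$ equal to $-\tfrac{\varrho}{n\varrho+1}\Psi(x)h(x)$, which is precisely the first summand of $S$. Using preservation of linear functions by $U_n^\varrho$ and Taylor's formula of order two for $\tilde g$ at $x$ then produces the Voronovskaya remainder representation
\[
S(x) = U_n^\varrho\bigl(R_{2,\tilde g}(\cdot,x);\,x\bigr),\quad R_{2,\tilde g}(t,x) := \tilde g(t)-\tilde g(x)-\tilde g'(x)(t-x)-\tfrac12\tilde g''(x)(t-x)^2.
\]

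To produce the stated bound for $|S|/\Psi$ I would decompose $h=h_\delta+(h-h_\delta)$ via a Steklov-type realiser $h_\delta\in C^2[0,1]$ of the second-order K-functional with the sharp inequalities $\|h-h_\delta\|_\infty\le\tfrac34\omega_2(h;\delta)$, $\delta\|h_\delta'\|_\infty\le c_1\omega_1(h;\delta)$, $\delta^2\|h_\delta''\|_\infty\le c_2\omega_2(h;\delta)$, and split $\tilde g=\tilde g_\delta+\tilde r_\delta$ accordingly. For the rough part the crude estimate $|R_{2,\tilde r_\delta}(t,x)|\le\frac{\varrho}{\varrho+1}(t-x)^2\cdot 2\|h-h_\delta\|_\infty$ paired with $U_n^\varrho((t-x)^2;x)/\Psi(x)=(\varrho+1)/(n\varrho+1)$ produces, after multiplication by $\frac{n\varrho+1}{\varrho+1}$, the leading $\frac{2\varrho}{\varrho+1}$-summand of the $\omega_2$ bracket. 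For the smooth part a third-order Taylor expansion of $\tilde g_\delta\in C^3$ with $\|\tilde g_\delta'''\|_\infty=\frac{2\varrho}{\varrho+1}\|h_\delta'\|_\infty$ yields $|R_{2,\tilde g_\delta}(t,x)|\le\frac{\varrho}{3(\varrho+1)}\|h_\delta'\|_\infty|t-x|^3$, and Cauchy–Schwarz against the second and fourth central moments of $U_n^\varrho$ (both explicit in \cite{Gonska-Paltanea:2010-1}) brings in the factor $\sqrt{(\varrho+2)/(n\varrho+2)}=\varepsilon$. This is the source of the extra $\varepsilon$ in front of $\omega_1(h;\varepsilon)$ and, via cross-terms with $\|h-h_\delta\|_\infty$, of the middle summand $\frac{2\varrho}{3(\varrho+1)}\varepsilon$ inside the $\omega_2$ bracket.

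Setting $\delta=\varepsilon$ calibrates the two contributions; the hypothesis $n\ge(4\varrho+6)/\varrho$ is exactly what secures $\varepsilon\le\tfrac12$, a requirement for the sharp K-functional/modulus equivalence on $[0,1]$ (in particular for the constant $\tfrac34$) to hold as used above. The principal technical obstacle will be reproducing each multiplicative constant exactly: the $\frac{2\varrho}{3(\varrho+1)}$ in front of $\omega_1$ requires the $1/6$ prefactor of the third-order Taylor remainder together with the sharp Cauchy–Schwarz bound on $U_n^\varrho(|t-x|^3;x)$, while recovering the composite $\omega_2$ coefficient — particularly the summand $\frac{7(\varrho+3)}{6(\varrho+1)}$ — demands the fourth central moment of $U_n^\varrho$ in closed form and careful bookkeeping of the mixed term in which the smooth-part estimate involving $\|h_\delta'\|_\infty$ is re-expressed through the $K$-functional inequality $\omega_1(h;\delta)\le 2\,\omega_2(h;\delta)\cdot(\cdot)+\dots$ so that it contributes to the $\omega_2$ bracket. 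Collecting the rough and smooth contributions then yields the stated inequality.
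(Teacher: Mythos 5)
Your overall architecture coincides with the paper's: the resolvent identity $A_n^{\varrho}(I-U_n^{\varrho})=\frac{\varrho}{n\varrho+1}I$, a Taylor expansion of $F_\infty$ composed with $U_n^{\varrho}$, the exact central moments, and the Gonska--Kovacheva smoothing with $\delta=\varepsilon$; your rough-part computation correctly produces the $\frac34\cdot\frac{2\varrho}{\varrho+1}\,\omega_2(h;\varepsilon)$ summand. The genuine gap is in the smooth part. Bounding $|R_{2,\tilde g_\delta}(t,x)|\le\frac16\|\tilde g_\delta'''\|_\infty|t-x|^3$ and then applying Cauchy--Schwarz gives $U_n^{\varrho}(|t-x|^3;x)\le\sqrt{U_n^{\varrho}((t-x)^2;x)\,U_n^{\varrho}((t-x)^4;x)}=O(\Psi(x)n^{-3/2})$, so after multiplying by $\frac{n\varrho+1}{\varrho+1}$ and by $\|h_\delta'\|_\infty\le\varepsilon^{-1}[2\omega_1(h;\varepsilon)+\frac32\omega_2(h;\varepsilon)]\sim\sqrt{n}\,\omega_1$ you are left with a term of order $\omega_1(h;\varepsilon)$ carrying \emph{no} decaying prefactor. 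The stated coefficient $\frac{2\varrho}{3(\varrho+1)}\sqrt{(\varrho+2)/(n\varrho+2)}$ requires the cubic contribution to be of order $\Psi/n^2$, and that is available only if you keep the cubic Taylor term with its sign and use the exact \emph{signed} third moment $U_n^{\varrho}((t-x)^3;x)=\frac{(\varrho+1)(\varrho+2)\Psi(x)\Psi'(x)}{(n\varrho+1)(n\varrho+2)}$, whose extra factor $\Psi'(x)$ (with $|\Psi'|\le1$) supplies the additional $1/n$; passing to the absolute third moment via Cauchy--Schwarz irretrievably loses a factor $\sqrt{n}$. Your attribution of the factor $\varepsilon$ to Cauchy--Schwarz is therefore incorrect: it arises from $\frac{\varrho+2}{n\varrho+2}\cdot\varepsilon^{-1}=\varepsilon$.

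Second, stopping the expansion of $\tilde g_\delta$ at the third derivative produces no $\|h_\delta''\|_\infty$ term at all, so the summand $\frac{7(\varrho+3)}{6(\varrho+1)}\omega_2(h;\varepsilon)$ cannot emerge from your decomposition. You need the fourth-order splitting $R_{2,\tilde g_\delta}(t,x)=\frac16\tilde g_\delta'''(x)(t-x)^3+\Theta_x(t)$ with $\Theta_x(t)=\frac16\int_x^t(t-u)^3\tilde g_\delta^{(4)}(u)\,du$ and $\tilde g_\delta^{(4)}=-\frac{2\varrho}{\varrho+1}h_\delta''$, then the fourth central moment from \cite{Gonska-Paltanea:2010-2}, which under $\Psi\le\frac14$ and $n\ge\frac{4\varrho+6}{\varrho}$ is at most $\frac74\cdot\frac{(\varrho+1)(\varrho+2)(\varrho+3)}{\varrho(n\varrho+1)(n\varrho+2)}\Psi(x)$, combined with $\|h_\delta''\|_\infty\le\frac{3}{2\varepsilon^2}\omega_2(h;\varepsilon)$. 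With these two corrections your scheme reduces to the paper's proof; as it stands, the smooth-part estimate does not yield the claimed inequality.
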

\begin{proof}
Let $h\in C[0,1]$ be fixed, and $g\in C^2[0,1]$ be arbitrary. \vspace{0.2cm}\\ Then
$|H_n^{\varrho}(h;x)|\leq |H_n^{\varrho}(h-g;x)|+|H_n^{\varrho}(g;x)|=|E_1|+|E_2|$.
Here
\begin{eqnarray*}
|E_1|&=&|A_n^{\varrho}(\Psi(h-g);x)-(-A_{\varrho}^{-1}(\Psi (h-g);x))|\\
&=&|A_n^{\varrho}(\Psi(h-g);x)-\frac{2\varrho}{\varrho+1}F_{\infty}(h-g;x)|\\
&\leq&||h-g||_{\infty}A_n^{\varrho}(\Psi;x)+\frac{2\varrho}{\varrho+1}|F_{\infty}(h-g;x)|\\
&=&||h-g||_{\infty}\dfrac{\varrho}{\varrho+1}\Psi(x)+\frac{2\varrho}{\varrho+1}||h-g||_{\infty}\frac{1}{2}\Psi(x)\\
&=&\frac{2\varrho}{\varrho+1}\Psi(x)|h-g||_{\infty}
\end{eqnarray*}
and
$$
|E_2|=|A_n^{\varrho}(\Psi g;x)-(-A_{\varrho}^{-1}(\Psi g;x))|.
$$
For $g\in C^2[0,1]$ one has $F_{\infty}:=F_{\infty}(g)\in C^4[0,1]$, $F''_{\infty}=-g, F'''_{\infty}=-g', F^{(4)}_{\infty}=-g''$. 
Moreover, by Taylor's formula we obtain for any points $y,t\in[0,1]$:
\begin{equation}\label{ps_g1}
F_{\infty}(t)=F_{\infty}(y)+F'_{\infty}(y)(t-y)+\frac{1}{2}F''_{\infty}(y)(t-y)^2+\frac{1}{6}F'''_{\infty}(y)(t-y)^3+\Theta_y(t)
\end{equation}
where
$$
\Theta_y(t):=\frac{1}{6}\int\limits_y^t(t-u)^3F^{(4)}_{\infty}(u)du.
$$
Fix $y$ and consider (\ref{ps_g1}) as an equality between two functions in the variable $t$. Applying to this equality the operator $U_n^{\varrho}(\cdot,y)$ one arrives at
\begin{eqnarray*}
U_n^{\varrho}(F_{\infty},y)&=&F_{\infty}(y)+\frac{1}{2}F''_{\infty}(y)U_n^{\varrho}((t-y)^2;y)+\frac{1}{6}F'''_{\infty}(y)U_n^{\varrho}((t-y)^3;y)+\\Ê&&+U_n^{\varrho}(\Theta_y;y)\\
&=& F_{\infty}(y)-\frac{1}{2}g(y)U_n^{\varrho}((t-y)^2;y)-\frac{1}{6}g'(y)(y)U_n^{\varrho}((t-y)^3;y)+\\ &&+U_n^{\varrho}(\Theta_y;y).
\end{eqnarray*}
This implies
$$
\frac{1}{2}g(y)U_n^{\varrho}((e_1-y)^2;y)-F_{\infty}(y)+U_n^{\varrho}(F_{\infty},y)=-\frac{1}{6}g'(y)U_n^{\varrho}((e_1-y)^3;y)+U_n^{\varrho}(\Theta_y;y).
$$
In the above equality we rewrite the left hand side as
$\frac{1}{2}g(y)U_n^{\varrho}((e_1-y)^2;y)-(I-U_n^{\varrho})(F_{\infty},y)$.
Thus we have
$$
g(y)U_n^{\varrho}((e_1-y)^2;y)-2(I-U_n^{\varrho})(F_{\infty},y)=-\frac{1}{3}g'(y)(y)U_n^{\varrho}((e_1-y)^3;y)+2U_n^{\varrho}(\Theta_y;y).
$$
Application of $A_n^{\varrho}$ yields
\begin{eqnarray}\label{ps_g2}
A_n^{\varrho}(g(\cdot)U_n^{\varrho}((e_1-\cdot)^2;\cdot);x)-2A_n^{\varrho}\circ(I-U_n^{\varrho})(F_{\infty},x)=\\
-\frac{1}{3}A_n^{\varrho}(g'(\cdot)U_n^{\varrho}((e_1-\cdot)^3;\cdot);x)+2A_n^{\varrho}(Q;x)\nonumber
\end{eqnarray}
where $Q(y):=U_n^{\varrho}(\Theta_y;y)$.
The first five moments are given by (see \cite[Cor. 2.1]{Gonska-Paltanea:2010-2}
\begin{eqnarray*}
U_n^{\varrho}(e_0;y)&=&1,\\
U_n^{\varrho}(e_1-y;y)&=&0,\\
U_n^{\varrho}((e_1-y)^2;y)&=&\dfrac{(\varrho+1)\Psi(y)}{n\varrho+1},\label{ch4_3.3}\\
U_n^{\varrho}((e_1-y)^3;y)&=&\dfrac{(\varrho+1)(\varrho+2)\Psi(y)\Psi'(y)}{(n\varrho+1)(n\varrho+2)},\nonumber\\
U_n^{\varrho}((e_1-y)^4;y)&=&\dfrac{3\varrho(\varrho+1)^2\Psi^2(y)n}{(n\varrho+1)(n\varrho+2)(n\varrho+3)}\nonumber\\
&&+\dfrac{-6(\varrho+1)(\varrho^2+3\varrho+3)\Psi^2(y)+(\varrho+1)(\varrho+2)(\varrho+3)\Psi(y)}{(n\varrho+1)(n\varrho+2)(n\varrho+3)}.\nonumber
\end{eqnarray*}
In the above expression we have
$2A_n^{\varrho}\circ(I-U_n^{\varrho})(F_{\infty},x)=\dfrac{2\varrho}{n\varrho+1}F_{\infty}(x)=\dfrac{2\varrho}{n\varrho+1}F_{\infty}(g;x)
.$  \\ Also
$A_n^{\varrho}(g(\cdot)U_n^{\varrho}((e_1-\cdot)^2;\cdot);x)=A_n^{\varrho}(g(\cdot)\dfrac{\varrho+1}{n\varrho+1}\Psi(\cdot);x)=\dfrac{\varrho+1}{n\varrho+1}A_n^{\varrho}(\Psi g;x)$.\\
Hence (\ref{ps_g2}) can be written as
\begin{eqnarray*}
&&\left|\dfrac{\varrho+1}{n\varrho+1}A_n^{\varrho}(\Psi g;x)-\dfrac{2\varrho}{n\varrho+1}F_{\infty}(g;x)\right|\\
&&\;\;\;\;\;\;\;\;\;\;\;\;=\left|-\frac{1}{3}g'(\cdot)U_n^{\varrho}(((e_1-\cdot)^3;\cdot);x)-2A_n^{\varrho}(Q;x)\right|\\
&&\;\;\;\;\;\;\;\;\;\;\;\;\leq \frac{1}{3}\left|A_n^{\varrho}\left(\dfrac{(\varrho+1)(\varrho+2)}{(n\varrho+1)(n\varrho+2)}\Psi'(\cdot)\Psi(\cdot);x\right)\right|+|2A_n^{\varrho}(Q;x)|\\
&&\;\;\;\;\;\;\;\;\;\;\;\;\leq \frac{1}{3}\dfrac{(\varrho+1)(\varrho+2)}{(n\varrho+1)(n\varrho+2)}||g'||_{\infty}\dfrac{\varrho}{\varrho+1}\Psi(x)+|2A_n^{\varrho}(Q;x)|.
\end{eqnarray*}
Multiplying the outermost sides of the latter inequality by $\frac{n\varrho+1}{\varrho+1}$ gives
\begin{eqnarray*}
|E_2|&=&\left|A_n^{\varrho}(\Psi g;x)-\dfrac{2\varrho}{\varrho+1}F_{\infty}(g;x)\right|\\
&\leq&\frac{\varrho(\varrho+2)}{3(n\varrho+2)(\varrho+1)}\Psi(x)||g'||_{\infty}+2\frac{n\varrho+1}{\varrho+1}|A_n^{\varrho}(Q;x)|.
\end{eqnarray*}
In the last summand we have $Q(y)=U_n^{\varrho}(\Theta_y;y)$
thus
\begin{eqnarray*}
|U_n^{\varrho}(\Theta_y;y)|&\leq&\dfrac{1}{6}U_n^{\varrho}((e_1-y)^4;y)||g''||_{\infty}\\
&\leq&\dfrac{1}{6}\cdot\dfrac{7}{4}\cdot\dfrac{(\varrho+1)(\varrho+2)(\varrho+3)}{\varrho(n\varrho+1)(n\varrho+2)}\Psi(y)||g''||_{\infty}.
\end{eqnarray*}
Hence 
\begin{eqnarray*}
\frac{2(n\varrho+1)}{\varrho+1}|A_n^{\varrho}(Q;x)|&\leq&\frac{2(n\varrho+1)}{\varrho+1}\cdot \dfrac{7}{24}\cdot\dfrac{(\varrho+1)(\varrho+2)(\varrho+3)}{\varrho(n\varrho+1)(n\varrho+2)}
A_n^{\varrho}(\Psi;x)||g''||_{\infty}\\ &=&\dfrac{7}{12}\cdot\dfrac{(\varrho+2)(\varrho+3)}{(\varrho+1)(n\varrho+2)}\Psi(x)||g''||_{\infty}.
\end{eqnarray*}
This leads to 
\begin{eqnarray*}
|E_2|&\leq&\frac{\varrho(\varrho+2)}{3(n\varrho+2)(\varrho+1)}\Psi(x)||g'||_{\infty}+\dfrac{7}{12}\cdot\dfrac{(\varrho+2)(\varrho+3)}{(\varrho+1)(n\varrho+2)}\Psi(x)||g''||_{\infty} \\
&=&\dfrac{(\varrho+2)}{3(n\varrho+2)(\varrho+1)}\Psi(x)\left\{\varrho||g'||_{\infty}+\frac{7}{4}(\varrho+3)||g''||_{\infty}\right\}.
\end{eqnarray*}
Hence for $h\in C[0,1]$ fixed, $g\in C^2[0,1]$ arbitrary we have
\begin{eqnarray*}
|H_n^{\varrho}(h;x)|=|E_1|+|E_2|\;\;\;\;\;\;\;\;\;\;\;\;\;\;\;\;\;\;\;\;\;\;\;\;\;\;\;\;\;\;\;\;\;\;\;\;\;\;\;\;\;\;\;\;\;\;\;\;\;\;\;\;\;\;\;\;\;\;\;\;\;\;\;\;\;\;\;\;\;\;\;\;\;\;\;\;\;\;\;\;\;\;\;\;\\
\leq \dfrac{2 \varrho}{\varrho+1}\Psi(x)||h-g||_{\infty}+\dfrac{(\varrho+2)}{3(n\varrho+2)(\varrho+1)}\Psi(x)\left\{\varrho||g'||_{\infty}+\frac{7}{4}(\varrho+3)||g''||_{\infty}\right\}\\
\end{eqnarray*}
Next we choose $g=h_{\varepsilon}, 0<\varepsilon=\sqrt{\frac{\varrho+2}{n \varrho+2}}\leq \frac{1}{2}$ and by applying Lemmas 2.1 and 2.4 in \cite{Gonska-Kovacheva:1994} we obtain
\begin{eqnarray*}
&&||h-g||_{\infty}\leq \frac{3}{4}\omega_2(h;\varepsilon)\\
&&||g'||\leq\frac{1}{\varepsilon}[2\omega_1(h;\varepsilon)+\frac{3}{2}\omega_2(h;\varepsilon)]\\
&&||g''||\leq\frac{3}{2\varepsilon^2}\omega_2(h;\varepsilon).
\end{eqnarray*}
Thus
\begin{eqnarray*}
|H_n^{\varrho}(h;x)|\leq \Psi(x)\left[\dfrac{2\varrho}{3(\varrho+1)}\sqrt{\dfrac{\varrho+2}{n\varrho+2}}\omega_1(h;\varepsilon)+ \;\;\;\;\;\;\;\;\;\;\;\;\;\;\;\;\;\;\;\;\;\;\;\;\;\;\;\;\;\;\;\;\right.\\ \left.\;\;\;\;\;\;\;\;\;\;\;\;\;\;\;\;+\dfrac{3}{4}\left(\dfrac{2\varrho}{\varrho+1}+\dfrac{2\varrho}{3(\varrho+1)}\sqrt{\dfrac{\varrho+2}{n\varrho+2}}+\dfrac{7(\varrho+3)}{6(\varrho+1)}\right)\omega_2(h;\varepsilon)\right].\nonumber
\end{eqnarray*}
\end{proof}
\begin{remark}
If we let $1\leq\varrho\rightarrow\infty$, then for all $n\geq 10$
\begin{eqnarray*}
\lim\limits_{\varrho\rightarrow\infty} |H_n^{\varrho}(h;x)|&=&\lim\limits_{\varrho\rightarrow\infty}|A_n^{\varrho}(\Psi h;x)-(-A_{\varrho}^{-1})(\Psi h;x)|\\
&=& |A_n^{\infty}(\Psi h;x)-(-A_{\infty}^{-1})(\Psi h;x)|\\
&\leq& 3\Psi(x)\left[\frac{1}{\sqrt{n}}\omega_1\left(h;\frac{1}{\sqrt{n}}\right)+\omega_2\left(h;\frac{1}{\sqrt{n}}\right)\right].
\end{eqnarray*}
This is a quantitative form of P\u alt\u anea's convergence result in \cite[Th. 3.2]{Paltanea:2006}.
\end{remark}




\bigskip
 \noindent
$\begin{array}{ll}
\textrm{Heiner Gonska}\\
 \textrm{University of Duisburg-Essen} \\
 \textrm{Faculty for Mathematics} \\
 \textrm{47048 Duisburg, Germany} \\
\mathtt{heiner.gonska@uni-due.de} 
\end{array} \qquad \qquad $
 $\begin{array}{ll}
\textrm{Ioan Ra\c sa}\\
 \textrm{Technical University of Cluj-Napoca}\\
 \textrm{Department of Mathematics} \\
 \textrm{400114 Cluj-Napoca, Romania} \\
\mathtt{Ioan.Rasa@math.utcluj.ro} 
\end{array} \qquad $
\; \vspace{0.1cm}
$\begin{array}{l}
 \vspace{0.5cm}\\
\textrm{Elena-Dorina St\u anil\u a}\\
 \textrm{University of Duisburg-Essen} \\
 \textrm{Faculty for Mathematics} \\
 \textrm{47048 Duisburg, Germany} \\
\mathtt{elena.stanila@stud.uni-due.de} 
\end{array} $

\end{document}